\documentclass[11pt]{amsart}

\usepackage{geometry}
\usepackage{amssymb}
\usepackage{amsmath}
\usepackage{mathtools}
\usepackage{xcolor}
\usepackage[colorlinks=true,citecolor=red,linkcolor=blue,urlcolor=red]{hyperref}
\usepackage[all]{xy}
\usepackage{tikz}
\usetikzlibrary{cd}

\newcommand{\CC}{\mathbb{C}}

\newcommand{\cX}{\mathcal{X}}
\newcommand{\cE}{\mathcal{E}}

\newcommand{\cQ}{\mathcal{Q}}

\newcommand{\cT}{\mathcal{T}}
\newcommand{\cA}{\mathcal{A}}
\newcommand{\cB}{\mathcal{B}}
\newcommand{\cC}{\mathcal{C}}
\newcommand{\cG}{\mathcal{G}}

\newcommand{\cO}{\mathcal{O}}
\newcommand{\Om}{\Omega^1_{\cX}(\log\Delta)}
\newcommand{\Qbar}{\overline{\cQ}}
\newcommand{\Qtor}{\cQ_{\mathrm{tor}}}

\DeclareMathOperator{\length}{length}
\DeclareMathOperator{\CH}{CH}

\newtheorem{thm}{Theorem}[section]
\newtheorem{lem}[thm]{Lemma}

\theoremstyle{definition}
\newtheorem{rem}[thm]{Remark}

\begin{document}
	
	\title[Removing a torsion-free hypothesis on Deligne--Mumford stacks]{Removing the torsion-free hypothesis in a positivity theorem on Deligne--Mumford stacks}
	
	\author{Shengyu Hou, Jihao Liu}
	\address{Department of Mathematics, Peking University, No. 5 Yiheyuan Road, Haidian District, Beijing 100871, China}
	\address{Beijing International Center for Mathematical Research, Peking University, No. 5 Yiheyuan Road, Haidian District, Beijing 100871, China}
	\email{liujihao@math.pku.edu.cn}
	
	\subjclass[2020]{14A20, 14C17, 14F10, 14J17}
	\keywords{Deligne--Mumford stack, logarithmic cotangent sheaf, pseudo-effective divisor, first Chern class, torsion subsheaf}
	\date{\today}
	
	\begin{abstract}
		We remove the torsion-free hypothesis from a positivity theorem of Casalaina-Martin and Zhjeqi, answering a question of them. The main result of this paper was obtained using generative AI, particularly ChatGPT 5.5 Pro and the Danus system.
	\end{abstract}
	
	\maketitle
	
	\section{Introduction}\label{sec:introduction}
	
	We work over the field of complex numbers $\CC$. Positivity statements for logarithmic cotangent sheaves turn geometric input into numerical constraints on their quotients. In the stack setting of Casalaina-Martin and Zhjeqi \cite{CMZ26a}, the relevant objects are coherent quotients of tensor powers of the logarithmic cotangent sheaf on a smooth proper integral Deligne--Mumford stack, and the numerical conclusion is the pseudo-effectivity of the first Chern class.
	
	Casalaina-Martin and Zhjeqi \cite{CMZ26a} proved the following positivity theorem, which is \cite[Theorem~4.3]{CMZ26a} and is also referred to there as Theorem~A.
	
	\begin{thm}[{\cite[Theorem~4.3]{CMZ26a}}]\label{thm:CMZ}
		Let $\cX$ be a smooth proper integral Deligne--Mumford stack over $\CC$ with projective coarse moduli space, and
		let $\Delta\subseteq\cX$ be a reduced divisor with at worst normal crossing singularities. Suppose that some positive
		tensor power of $\Omega^1_\cX(\log \Delta )$ contains a subsheaf with big determinant. Then the first Chern class of every
		torsion-free coherent quotient sheaf of every positive tensor power of $\Omega^1_\cX(\log \Delta )$ is pseudo-effective.
	\end{thm}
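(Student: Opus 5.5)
The plan is to reduce to the scheme case via a finite cover, and there argue with Harder--Narasimhan filtrations against movable curve classes, following Campana--P\u{a}un.

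\emph{Reduction to a scheme.} Let $\pi\colon \cX\to X$ be the coarse moduli map. Choose a finite surjective morphism $p\colon Y\to\cX$ from a smooth projective variety $Y$, using Kresch--Vistoli covers followed by a log resolution. Arrange that $D:=(p^{-1}\Delta)_{\mathrm{red}}$ together with the branch locus is a simple normal crossing divisor $D'$.

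A coherent sheaf $\cQ$ on $\cX$ has pseudo-effective $c_1(\cQ)$ if and only if $p^*c_1(\cQ)$ is pseudo-effective on $Y$. Indeed, $p_*p^*$ is multiplication by $\deg p$ on $\CH^1(\cX)_{\mathbb Q}=\CH^1(X)_{\mathbb Q}$, and pushforward preserves pseudo-effectivity.

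\emph{Comparing the sheaves on $Y$.} On $Y$ there is a natural map $p^*\Om\to\Omega^1_Y(\log D')$. It is injective and an isomorphism away from a closed set. The hypothesis that some tensor power of $\Om$ contains a subsheaf with big determinant then pulls back. Hence some tensor power of $\Omega^1_Y(\log D')$ contains a subsheaf with big determinant, because pullback of a big class under a finite map is big. By the Campana--P\u{a}un theorem this gives that $K_Y+D'$ is big, in particular pseudo-effective.

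\emph{Slope argument.} Suppose that $\cQ$ is a torsion-free quotient of $(\Om)^{\otimes m}$ with $c_1(\cQ)$ not pseudo-effective. By BDPP duality there is a movable class $\alpha$ on $X$ with $c_1(\cQ)\cdot\alpha<0$. Pull back to $Y$ and use that Harder--Narasimhan filtrations with respect to movable classes are compatible with tensor products in characteristic $0$. Then $\mu_{\alpha,\min}(p^*\Om)<0$, so the maximal destabilizing subsheaf $\mathcal F$ of the dual $p^*T_{\cX}(-\log\Delta)$ has positive minimal slope. Moreover, $\mathcal F$ is $p$-invariant, being canonical, and descends.

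By the Campana--P\u{a}un algebraicity criterion, applied on $Y$ and then to the descended foliation on $\cX$, $\mathcal F$ is an algebraic foliation with rationally connected general leaves tangent to the log structure. Such leaves are not contained in $\Delta$. Restricting the big subsheaf of $(\Om)^{\otimes k}$ to a general leaf then gives a contradiction, since a rationally connected variety has no nonzero sections of tensor powers of its (log) cotangent sheaf that remain positive. Equivalently, this contradicts the bigness of $K_Y+D'$ via Campana--P\u{a}un's log version.

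\emph{Main obstacle.} I expect the main difficulty to be the comparison along the ramification of $p$. The map $p^*\Om\hookrightarrow\Omega^1_Y(\log D')$ is not an isomorphism in codimension one in general. Quotients go the wrong way under this inclusion, so the slope inequalities must be arranged so that the cokernel (supported on the branch divisor) only helps. My first attempt would be to choose $p$ étale in codimension one over the stack, which is possible since the stack structure absorbs the ramification along divisors with nontrivial stabilizers. Failing that, I would replace $D'$ by the reduced preimage of $\Delta$ alone and use the log ramification formula.
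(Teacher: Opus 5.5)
The paper gives no proof of this statement; it quotes it from Casalaina-Martin--Zhjeqi and uses it as a black box. So your sketch has to stand on its own. Its formal reductions are fine: pushing forward along $p$, BDPP duality, and compatibility of movable Harder--Narasimhan filtrations with tensor powers. But there is a genuine gap exactly at the point you flag.

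\textbf{The comparison on $Y$ runs the wrong way.} What must be controlled on $Y$ is $p^*\Om$. This is a proper subsheaf of $\Omega^1_Y(\log D')$ with cokernel supported on the ramification, and dually $T_Y(-\log D')\subseteq p^*T_\cX(-\log\Delta)$. Your destabilizing $\mathcal{F}$ lives in the \emph{larger} sheaf. The subsheaf $\mathcal{F}\cap T_Y(-\log D')$ has determinant smaller by an effective divisor supported on the ramification, so it need not be positive. Likewise $K_Y+D'=p^*(K_\cX+\Delta)+R$ with $R\geq 0$. Hence bigness of $K_Y+D'$, and Campana--P\u{a}un's theorem for $(Y,D')$, say nothing about quotients of $p^*(\Om)^{\otimes m}$, nor even about pseudo-effectivity of $K_\cX+\Delta$. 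For the same reason the algebraicity criterion cannot be applied on $Y$, since $\mathcal{F}$ is in general not a subsheaf of $T_Y$. It can be applied on the coarse space, because saturating in the larger sheaf $T_X$ only raises slopes.

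\textbf{Neither proposed fix works.} By Zariski--Nagata purity, checked on an \'etale atlas, a finite cover of the smooth stack $\cX$ by a normal scheme that is \'etale in codimension one is \'etale. Such a cover exists only if $\cX$ is a quotient of a scheme by a finite group. This fails, e.g., for $\cX=\mathbb{P}(1,n)\times C$ with $n\geq 2$, $g(C)\geq 2$, and $\Delta$ the preimage of two non-stacky points of $\mathbb{P}(1,n)$. This stack satisfies all hypotheses (already $\det\Om$ is ample), while $\mathbb{P}(1,n)$ is simply connected and not a scheme. So the stack structure does not absorb the ramification of $p$; it forces it. Replacing $D'$ by $(p^{-1}\Delta)_{\mathrm{red}}$ leaves the inclusion pointing the same way, and the log ramification formula only measures the defect.

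\textbf{The closing contradiction is also not valid as stated.} Rational connectedness of the leaves does not rule out positivity of tensor powers of \emph{log} cotangent sheaves. For example, $\Omega^1_{\mathbb{P}^1}(\log D)\cong\cO(2)$ when $D$ is four points. Also, $\mathbb{P}^1\times C$ with boundary $D\times C$ and $g(C)\geq 2$ has ample $K+D$, yet it is foliated by rational curves.

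What has to be used is that $\mathcal{F}$ is positive as a subsheaf of the log (orbifold) tangent sheaf. Turning that into a contradiction needs a genuinely orbifold input. One example is showing that the general leaf, with its induced orbifold boundary, has non-pseudo-effective log canonical class, via a positivity result for the relative orbifold canonical class of the fibration defined by $\mathcal{F}$. You also need pseudo-effectivity of $K_\cX+\Delta$ itself, which your argument never establishes.

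\textbf{Suggested repair.} Run everything with $p^*\Om$ rather than with $\Omega^1_Y(\log D')$. Campana--P\u{a}un's orbifold cotangent sheaves on adapted covers are exactly sheaves of this form: pullbacks of $\Omega^1$ of a root stack. Their orbifold results (generic semipositivity when $K+D$ is pseudo-effective, and ``big subsheaf implies $K+D$ big'') are what must be transported to $\cX$. One route is the codimension-one identification of $\Om$ with the orbifold log cotangent sheaf of the coarse pair $(X,\sum_i(1-\tfrac{1}{m_i})E_i+\Delta_X)$, after rigidifying the generic stabilizer. Here $E_i\subseteq X$ are the branch divisors of $\cX\to X$ with ramification indices $m_i$.
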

	
	In \cite[Remark~4.5]{CMZ26a}, the authors explain that the torsion-free hypothesis in Theorem~\ref{thm:CMZ} is imposed only for a technical reason. Let $\cE_m:=(\Omega ^1_\cX(\log \Delta ))^{\otimes m}$. Given an arbitrary coherent quotient $\pi\colon\cE_m\twoheadrightarrow\cQ$, write $\Qtor\subseteq\cQ$ for the torsion subsheaf and $\Qbar:=\cQ/\Qtor$ for the torsion-free quotient. Theorem~\ref{thm:CMZ} applies to $\Qbar$, and the first Chern class of the torsion sheaf $\Qtor$ is an effective codimension-one cycle. Additivity of the first Chern class shows
	$$
	c_1(\cQ)=c_1(\Qtor)+c_1(\Qbar).
	$$
	Thus the desired pseudo-effectivity of $c_1(\cQ)$ follows immediately.
	
	However, in \cite{CMZ26a}, the first Chern class $c_1(\Qtor)$ is well-defined only when $\Qtor$ is a quotient of a torsion-free coherent sheaf (see Section~\ref{subsec:first-Chern-classes}). In \cite[Remark~4.5]{CMZ26a}, Casalaina-Martin and Zhjeqi state that they do not know whether this holds in general. Our first goal is to prove that it holds in the present setting.

	\begin{thm}[=Theorem A]\label{thm:A}
		Let $\cX$ be an integral Deligne--Mumford stack, let $\cE$ be a locally free coherent $\cO_\cX$-module, and let $\pi\colon\cE\twoheadrightarrow\cQ$ be a coherent quotient. Let $\Qtor\subseteq\cQ$ be the torsion subsheaf. Then $\Qtor$ is a quotient of a torsion-free coherent sheaf. In fact, set $\cE':=\pi^{-1}(\Qtor)=\ker\bigl(\cE\xrightarrow{\ \pi\ }\cQ\to\cQ/\Qtor\bigr)$. Then $\cE'$ is torsion-free and the restriction of $\pi$ induces a surjection $\cE'\twoheadrightarrow\Qtor$. 
	\end{thm}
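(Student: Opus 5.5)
The plan is to check two things: that $\cE'$ is torsion-free, and that $\pi|_{\cE'}$ maps onto $\Qtor$. Both are local statements in the étale topology. On an integral Deligne--Mumford stack, torsion and torsion-freeness of coherent sheaves are defined through an étale atlas $U\to\cX$, and the torsion subsheaf is compatible with étale base change. So we may reduce to the case of a scheme $U$, with $\cE$ locally free, or even to the local rings of $U$. One caveat is that an atlas of an integral stack need not be integral, only reduced and equidimensional with irreducible components that may be disjoint. We therefore work componentwise, or with torsion defined as sections killed by a non-zero-divisor. I expect this bookkeeping to be the only real subtlety; the algebra itself is formal.

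\textbf{Torsion-freeness of $\cE'$.} By construction, $\cE'=\ker(\cE\to\cQ\to\Qbar)$ is a coherent subsheaf of the locally free sheaf $\cE$. On an integral (or reduced) atlas $U$, a locally free sheaf is torsion-free. A section killed by a nonzero (non-zero-divisor) local function $f$ vanishes, because multiplication by $f$ on $\cO_U^{\oplus r}$ is injective. A subsheaf of a torsion-free sheaf is torsion-free, so $\cE'$ is torsion-free. Since the notion is étale local and the torsion subsheaf commutes with étale pullback (flatness preserves non-zero-divisors and kernels), this descends to $\cX$.

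\textbf{Surjectivity onto $\Qtor$.} The composite $\cE\to\cQ\to\Qbar=\cQ/\Qtor$ is surjective. Its kernel is $\cE'=\pi^{-1}(\Qtor)$, and we have $\pi(\cE')\subseteq\Qtor$. Conversely, let $s$ be a local section of $\Qtor$. Since $\pi$ is an epimorphism of sheaves, étale locally $s=\pi(e)$ for some section $e$ of $\cE$. The image of $e$ in $\Qbar$ is the image of $s$, which is zero, so $e\in\cE'$. Hence $\pi|_{\cE'}\colon\cE'\to\Qtor$ is an epimorphism of étale sheaves. Its kernel is $\ker\pi$, giving the exact sequence
\[
0\to\ker\pi\to\cE'\to\Qtor\to 0.
\]
Coherence of $\cE'$ follows from its being the kernel of a map of coherent sheaves.

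\textbf{Main obstacle.} The step needing care is making ``torsion'' precise on the stack and confirming that $\Qtor$ is a coherent subsheaf compatible with the étale topology. Once $\Qtor$ is known to be coherent and formed étale locally, $\Qbar$ is a coherent quotient and everything above is formal diagram chasing. I would handle this first by recalling the definition from Section~\ref{subsec:first-Chern-classes}, namely sections annihilated by a nonzero local function on a smooth atlas. I would then verify flat base change for the torsion submodule: for a flat map $A\to B$ of reduced rings with $B$ having no embedded components, $T(M)\otimes_A B=T(M\otimes_A B)$. This is standard because $T(M)$ is the kernel of $M\to M\otimes_A K(A)$, where $K(A)$ is the total ring of fractions, and this kernel commutes with flat, generically étale base change.
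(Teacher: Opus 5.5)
Your proposal is correct and follows essentially the paper's route. Torsion-freeness of $\cE'$ comes from its being a subsheaf of the locally free, hence torsion-free, sheaf $\cE$ (Lemma~\ref{lem:subsheaf-torsion-free}), and surjectivity of $\cE'\to\Qtor$ is the diagram chase that the paper packages as the snake lemma applied to $0\to\cE'\to\cE\to\Qbar\to0$ over $0\to\Qtor\to\cQ\to\Qbar\to0$ (Lemma~\ref{lem:inverse-image}); your extra care with non-zero-divisors on a possibly reducible étale atlas and compatibility with étale base change is more careful than the paper, which takes the torsion subsheaf as defined via irreducible affine étale charts in Section~2.1 (not Section~\ref{subsec:first-Chern-classes}, as you cite).
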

	
	Thus $c_1(\Qtor)$ can be defined. We can therefore remove the torsion-free hypothesis from Theorem~\ref{thm:CMZ}. This is our second main theorem.
	
	\begin{thm}[=Theorem B]\label{thm:B}
		In the setting of Theorem~\ref{thm:CMZ}, the torsion-free hypothesis on the quotient sheaf may be removed. Namely, let $\cX$ be a smooth proper integral Deligne--Mumford stack over $\CC$ with projective coarse moduli space, and
		let $\Delta\subseteq\cX$ be a reduced divisor with at worst normal crossing singularities. Suppose that some positive
		tensor power of $\Omega^1_\cX(\log \Delta )$ contains a subsheaf with big determinant. Then the first Chern class of every coherent quotient sheaf of every positive tensor power of $\Omega^1_\cX(\log \Delta )$ is pseudo-effective.
	\end{thm}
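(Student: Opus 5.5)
The plan is to reduce Theorem~B to Theorem~\ref{thm:CMZ} using the torsion filtration of the quotient. Theorem~A makes each piece of that filtration have a well-defined first Chern class, and additivity of $c_1$ then finishes the argument.

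Fix $m>0$ and a coherent quotient $\pi\colon\cE_m\twoheadrightarrow\cQ$, where $\cE_m=(\Om)^{\otimes m}$. Since $\cX$ is smooth, $\Om$ is locally free, so $\cE_m$ is locally free; hence $\cQ$ is a quotient of a torsion-free sheaf and $c_1(\cQ)$ is defined in the sense of Section~\ref{subsec:first-Chern-classes}. Consider the short exact sequence $0\to\Qtor\to\cQ\to\Qbar\to 0$. Then:

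\begin{enumerate}
\item The sheaf $\Qbar$ is a torsion-free coherent quotient of $\cE_m$. Theorem~\ref{thm:CMZ} applies directly and shows that $c_1(\Qbar)$ is pseudo-effective.
\item Theorem~A, applied with $\cE=\cE_m$, gives a torsion-free sheaf $\cE'=\pi^{-1}(\Qtor)$ together with a surjection $\cE'\twoheadrightarrow\Qtor$. Hence $c_1(\Qtor)$ is defined.
\item We identify $c_1(\Qtor)$ with an effective cycle. Since $\Qtor$ is torsion, it vanishes at the generic point of $\cX$. Its first Chern class should therefore equal $\sum_D \length_{\cO_{\cX,D}}\bigl((\Qtor)_D\bigr)\,[D]$, summed over the prime divisors $D$ in its support. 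This cycle is effective, hence pseudo-effective.
\item We need $c_1(\cQ)=c_1(\Qtor)+c_1(\Qbar)$. All three sheaves are quotients of torsion-free sheaves compatibly: $\cE'\subseteq\cE_m$ maps onto $\Qtor$, $\cE_m$ maps onto $\cQ$, and $\cE_m/\cE'\cong\Qbar$. So additivity should follow from the definition via resolutions or determinants on the smooth locus, together with additivity for torsion-free sheaves. Granting this, $c_1(\cQ)$ is a sum of two pseudo-effective classes and is therefore pseudo-effective.
\end{enumerate}

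The main obstacle is steps 3 and 4: matching the paper's definition of $c_1$ for quotients of torsion-free sheaves on a DM stack (Section~\ref{subsec:first-Chern-classes}) with the divisorial length formula, and verifying additivity in the stack setting. For step 3, I would first compute $c_1(\Qtor)=c_1(\cE')-c_1(\cK)$, where $\cK=\ker(\cE'\to\Qtor)=\ker\pi$, which is torsion-free. Both sheaves are reflexive-determinant-computable away from codimension two, where $\cX$ is smooth and everything is locally free. The difference of their determinants is then the divisor of $\det(\cK\hookrightarrow\cE')$, which is effective with multiplicities equal to the lengths of $\Qtor$ at codimension-one points. This computation can be checked étale locally on an atlas and descends because codimension-one cycles on $\cX$ are determined on an atlas.

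Step 4 then reduces to the same determinant bookkeeping. We have $c_1(\cQ)=c_1(\cE_m)-c_1(\ker\pi)$ and $c_1(\Qbar)=c_1(\cE_m)-c_1(\cE')$, and subtracting these recovers $c_1(\Qtor)=c_1(\cE')-c_1(\ker\pi)$.
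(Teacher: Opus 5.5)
Your proposal is correct and follows the paper's proof essentially step for step. You apply Theorem~\ref{thm:CMZ} to $\Qbar$, use Theorem~A to make $c_1(\Qtor)$ well-defined, show $c_1(\Qtor)$ is effective by the same determinant/length computation as Lemma~\ref{lem:c1-properties}, and conclude by additivity along $0\to\Qtor\to\cQ\to\Qbar\to 0$. The only cosmetic difference is that you check this instance of additivity directly from the nested presentations $\ker\pi\subseteq\cE'\subseteq\cE_m$, instead of citing the general additivity of $c_1$ stated in Section~\ref{subsec:first-Chern-classes}.
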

	
	The paper is organized as follows. In Section~2, we recall the elementary facts about coherent sheaves on Deligne--Mumford stacks that will be used below. In Section~3, we prove Theorem~A. In Section~4, we prove Theorem~B.
	
	\begin{rem}
		The main result of this paper was obtained using generative AI, particularly ChatGPT 5.5 Pro and the Danus system, a specialized agent built on the Rethlas system and substantially more capable of conducting fundamental mathematical research. Human verification and polishing were done afterwards. See \cite{Liu+26} and \cite{Ju+26} for detailed introductions to the Danus system and the Rethlas system, respectively. Due to the limitation of generative AI, it is possible that we have missed some related references in the literature, and we welcome any comments from experts.
	\end{rem}
	
	\subsection*{Acknowledgements}
	Jihao Liu was partially supported by the National Key R\&D Program of China \#\allowbreak 2024YFA1014400. He would like to thank the Rethlas team, namely Haocheng Ju, Jiedong Jiang, Shurui Liu, Guoxiong Gao, Yuefeng Wang, Zeming Sun, Bin Wu, Liang Xiao, and Bin Dong, for their contributions to the development of Rethlas and its customized version used for the problem studied in this paper. He would like to thank Ruochuan Liu and Gang Tian for constant support and encouragement.
	
	Shengyu Hou was partially supported by the NSFC Grant (No. 12271384). He would like to thank Yifei Chen for constant support. He would also like to thank the Rethlas team for their contributions to the development of Rethlas and its customized version used for the problem studied in this paper.
	
	\section{Preliminaries}\label{sec:prelim}
	
	We recall some conventions on coherent sheaves and cycle classes on Deligne--Mumford stacks.
	
	\subsection{Torsion subsheaves}
	
	Let $\cX$ be an integral Deligne--Mumford stack, and let $\cG$ be a coherent sheaf on $\cX$. For every irreducible affine étale chart $U\to \cX$, $\cG(U)$ is an $\cO_U(U)$-module. Let $\cG(U)_{\operatorname{tor}}$ be the torsion submodule of $\cG(U)$. These submodules are compatible with restrictions to smaller irreducible affine étale charts, and hence define a coherent subsheaf $\cG_{\operatorname{tor}}\subseteq\cG$. We call $\cG_{\operatorname{tor}}$ the \textbf{torsion subsheaf} of $\cG$. We say that $\cG$ is \textbf{torsion-free} if $\cG_{\operatorname{tor}}=0$. The following lemma follows immediately from the definition.
	\begin{lem}\label{lem:subsheaf-torsion-free}
		Let $\cG'$ be a subsheaf of $\cG$. Then 
		$$\text{$\cG$ is torsion-free}\quad \Rightarrow \quad \text{$\cG'$ is torsion-free}.$$
	\end{lem}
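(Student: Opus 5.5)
The plan is to argue locally on an étale chart and reduce to an elementary statement about modules over a domain. Torsion subsheaves are defined by taking torsion submodules on irreducible affine étale charts. So it suffices to show the following: for every irreducible affine étale chart $U=\operatorname{Spec}A\to\cX$, with $A$ an integral domain because $\cX$ is integral and the chart is irreducible and smooth-locally reduced, the inclusion $\cG'(U)\subseteq\cG(U)$ gives $\cG'(U)_{\operatorname{tor}}\subseteq\cG(U)_{\operatorname{tor}}$.

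For this I will use that a subsheaf $\cG'\subseteq\cG$ yields an injection of $A$-modules $\cG'(U)\hookrightarrow\cG(U)$, compatible with the module structures. Injectivity holds because sections of quasi-coherent sheaves on an affine chart form a left exact functor. Now take $s\in\cG'(U)$ with $as=0$ for some nonzero $a\in A$. Its image in $\cG(U)$ is still killed by $a$, so it lies in $\cG(U)_{\operatorname{tor}}$. This module vanishes by the hypothesis that $\cG$ is torsion-free, so the image of $s$ is zero, and injectivity gives $s=0$. Hence $\cG'(U)_{\operatorname{tor}}=0$ for every such chart, and therefore $\cG'_{\operatorname{tor}}=0$.

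The only point needing care is whether the charts used in the definition are those on which $\cO_U(U)$ is a domain. Irreducibility alone gives this only after knowing $U$ is reduced. Reducedness follows because reducedness is étale-local and $\cX$ is integral, hence reduced. Beyond this bookkeeping, and confirming that "subsheaf" means a monomorphism of coherent sheaves so that sections inject, the argument is immediate. I do not expect a genuine obstacle.
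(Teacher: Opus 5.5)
Your argument is correct. It is exactly the verification the paper has in mind: the paper gives no separate proof and says the lemma follows immediately from the chart-wise definition of $\cG_{\operatorname{tor}}$, and your observation that $\cG'(U)\hookrightarrow\cG(U)$ carries $\cG'(U)_{\operatorname{tor}}$ into $\cG(U)_{\operatorname{tor}}=0$ on each irreducible affine \'etale chart is precisely that check. The extra point that these charts have domain coordinate rings (reducedness being \'etale-local) is bookkeeping the paper leaves implicit.
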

	
	\subsection{First Chern classes}\label{subsec:first-Chern-classes}
	
	Let $\cX$ be a smooth integral Deligne--Mumford stack. We use the notation for first Chern classes adopted in \cite[Section~1.5]{CMZ26b}. Namely, if $\cG$ is a torsion-free coherent sheaf on $\cX$, we define
	$
	\det \cG:=\left(\bigwedge^{\operatorname{rank}\cG}\cG\right)^{\vee\vee}
	$
	and
	$
	c_1(\cG):=c_1(\det\cG)\in \CH_{\dim \cX-1}(\cX).
	$
	More generally, if $\cG$ is a coherent sheaf on $\cX$ which is a quotient of a torsion-free coherent sheaf, then it admits an exact sequence
	$$
	0\to \cG''\to \cG'\to \cG\to 0
	$$
	such that $\cG'$ is torsion-free. By Lemma~\ref{lem:subsheaf-torsion-free}, the sheaf $\cG''$ is also torsion-free. We define
	$\det \cG:=\det \cG'\otimes (\det\cG'')^{-1}$ and 
	$
	c_1(\cG):=c_1(\det \cG)=c_1(\cG')-c_1(\cG'')
	$.
	This definition is independent of the choice of $\cG'$ and is additive in short exact sequences. More precisely, if
	$$
	0\to \cA\to \cB\to\cC\to 0
	$$
	is a short exact sequence of coherent sheaves which are quotients of torsion-free coherent sheaves, then
	$
	c_1(\cB)=c_1(\cA)+c_1(\cC).
	$
	Thus, whenever we write $c_1(\cG)$ for a coherent sheaf $\cG$, we first need to know that $\cG$ is a quotient of a torsion-free coherent sheaf.
	
	\begin{lem}\label{lem:c1-properties}
		Let $\cX$ be a smooth integral Deligne--Mumford stack, and let $\cT$ be a coherent torsion sheaf which is a quotient of a torsion-free coherent sheaf. Then
		$$c_1(\cT)=\sum_D \operatorname{length}_{\cO_{\cX,\eta_D}}(\cT_{\eta_D})[D],$$
		where $D$ runs over the integral codimension-one closed substacks of $\cX$.
	\end{lem}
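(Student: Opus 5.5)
Write $\cT=\cG'/\cG''$ with $\cG'$ torsion-free, as in Section~\ref{subsec:first-Chern-classes}. Then $c_1(\cT)=c_1(\det\cG')-c_1(\det\cG'')$, and the plan is to compute this difference divisor by divisor. Both $\det\cG'$ and $\det\cG''$ are reflexive of rank one on a smooth stack, hence invertible. Since $\cT$ is torsion, $\cG''\hookrightarrow\cG'$ is generically an isomorphism, so both sheaves have the same rank $r$. The induced map $\bigwedge^r\cG''\to\bigwedge^r\cG'$ is a generic isomorphism, and after taking double duals it gives an injection of line bundles
\[
\det\cG''\hookrightarrow\det\cG'.
\]
Hence $\det\cT=\det\cG'\otimes(\det\cG'')^{-1}\cong\cO_\cX(Z)$, where $Z$ is the effective Cartier divisor cut out by the determinant section. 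So $c_1(\cT)=[Z]$, and it remains to identify the multiplicity of $Z$ along each $D$ with $\length(\cT_{\eta_D})$.

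The multiplicities can be computed étale locally, since cycles and lengths on a Deligne--Mumford stack are defined through an étale atlas and both sides are compatible with étale pullback. Fix an integral codimension-one closed substack $D$, and take an étale chart $U\to\cX$ with $U$ smooth and a prime divisor $D_U$ lying over $D$. The local ring $R=\cO_{U,\eta_{D_U}}$ is a DVR because $U$ is smooth, and it is étale over $\cO_{\cX,\eta_D}$. Lengths are preserved under this étale base change, because the extension is unramified with trivial residue field extension at the generic point in the appropriate sense; more simply, we define $\length_{\cO_{\cX,\eta_D}}$ via the chart. Over $R$, the torsion-free modules $G''\subseteq G'$ are free of rank $r$. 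Choose bases so that the inclusion matrix is diagonal with entries $t^{a_1},\dots,t^{a_r}$, by elementary divisors. Then $\det$ is multiplication by $t^{\sum a_i}$, and
\[
\length(G'/G'')=\textstyle\sum a_i,
\]
which is the order of vanishing of $Z$ along $D_U$. Double duals do not change anything at a codimension-one point, since localization commutes with $\bigwedge$ and duals, and the modules are already free there.

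Finally, $Z$ is supported only on divisors where $\cT_{\eta_D}\neq0$. Summing over $D$ gives
\[
[Z]=\sum_D\length(\cT_{\eta_D})[D],
\]
and the sum is finite because $\cT$ is coherent. The main obstacle is the bookkeeping on the stack: checking that the identification of $[Z]$ with a Weil divisor and the lengths at $\eta_D$ are well-defined and compatible with the étale charts. I would handle this by working with the étale-local description of Chow groups of DM stacks with $\mathbb{Q}$-coefficients (or integral, following \cite{CMZ26b}), noting that both sides descend because they agree on every chart and are compatible under pullback.
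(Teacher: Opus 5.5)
Your proposal is correct and follows essentially the same route as the paper. Both identify $c_1(\cT)$ with the divisor of the determinant section of $\det\cG'\otimes(\det\cG'')^{-1}$, localize at each $\eta_D$ where the local ring is a DVR, and use Smith normal form to get $\length(\cT_{\eta_D})=\sum a_i=v_D(\det)$. The differences are minor: the paper works directly in $\cO_{\cX,\eta_D}$ rather than passing to an \'etale chart, and it only calls the determinant a rational section, whereas you correctly note it is regular, so $Z$ is effective.

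One small correction to your chart bookkeeping: an \'etale map need not induce a trivial residue field extension at $\eta_{D_U}$. This does not affect the argument, because $A=\cO_{\cX,\eta_D}\to R$ is flat with $\mathfrak{m}_AR=\mathfrak{m}_R$. That alone gives $\length_R(M\otimes_AR)=\length_A(M)$ and $v_{D_U}|_A=v_D$.
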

	\begin{proof}
		$\cT$ admits an exact sequence $$0\to \cG'\xrightarrow{\varphi } \cG\to \cT\to 0$$ such that $\cG$ and $\cG'$ are torsion-free and $c_1(\cT)=c_1(\det\cG\otimes (\det \cG')^{-1})$. The determinant $\det\varphi$ is a rational section of $\det\cG\otimes (\det \cG')^{-1}$, thus $c_1(\cT)=\sum_D v_D(\det\varphi)[D]$. Let $D\subseteq\cX$ be an integral codimension-one closed substack, and let $\eta_D$ be its generic point. Since $\cX$ is smooth and integral, $\cO_{\cX,\eta_D}$ is a discrete valuation ring and $v_D$ is the valuation on it. Let $\pi$ be the uniformizer of $\cO_{\cX,\eta_D}$. The localizations of $\cG$ and $\cG'$ are free modules of the same rank, thus the localization of the exact sequence becomes $$0\to \cO_{\cX,\eta_D}^r\xrightarrow{\varphi_{\eta_D} } \cO_{\cX,\eta_D}^r\to \cT_{\eta_D}\to 0,$$ where $\varphi_{\eta_D}$ is an $r\times r$ matrix. Using the Smith normal form over DVRs, after changing bases we may assume $\varphi_{\eta_D} $ is a diagonal matrix $\operatorname{diag}(\pi^{a_1}, \cdots , \pi ^{a_r})$. 
		In this case, $\cT_{\eta_D}\cong \bigoplus_{i=1}^r \cO_{\cX,\eta_D}/(\pi^{a_i})$ and $\length_{\cO_{\cX,\eta_D}}(\cT_{\eta_D})=a_1+\cdots+a_r$. The valuation $v_D(\det\varphi )=v_\pi(\det\varphi _{\eta_D})=v_\pi(\pi^{a_1+\cdots +a_r})=a_1+\cdots+a_r$, and thus it is equal to $\length_{\cO_{\cX,\eta_D}}(\cT_{\eta_D})$. 
	\end{proof}
	
	\subsection{Effective and pseudo-effective cycles}
	
	Let $\cX$ be a smooth integral Deligne--Mumford stack, and set $d:=\dim\cX$. We say that a cycle class $\alpha\in \CH_{d-1}(\cX)$ is \textbf{effective} if
	$
	\alpha=\sum_{j=1}^r a_j[D_j]
	$
	for some integral codimension-one closed substacks $D_j\subseteq\cX$ and some $a_j\in\mathbb{N}$.
	
	Moreover, assume that $\cX$ is proper and has a projective coarse moduli space $\pi\colon \cX\to X$. We define pseudo-effectivity as in \cite[Section~2]{CMZ26b}. The push-forward $\pi_*$ induces an isomorphism
	$$
	\pi_*\colon \CH_{d-1}(\cX)_{\mathbb{R}}\xrightarrow{\sim}\CH_{d-1}(X)_{\mathbb{R}}.
	$$
	We say that $\alpha\in \CH_{d-1}(\cX)$ is \textbf{pseudo-effective} if
	$
	\pi_*\alpha\in \CH_{d-1}(X)_{\mathbb{R}}
	$
	is pseudo-effective on the projective variety $X$. With this convention, every effective class is pseudo-effective.
	
	\section{Proof of Theorem A}\label{sec:torsion-quotient}
	
	We first prove Theorem A (=Theorem~\ref{thm:A}).
	
	\begin{lem}\label{lem:inverse-image}
		Let $\cX$ be a noetherian Deligne--Mumford stack, let $\pi\colon\cE\twoheadrightarrow\cQ$ be a surjection of coherent sheaves, and let $\cQ'\subseteq\cQ$ be a coherent subsheaf. Define
		$$
		\cE':=\pi^{-1}(\cQ')=\ker(\cE\xrightarrow{\pi}\cQ\to\cQ/\cQ').
		$$
		Then $\pi|_{\cE'}:\cE'\to \cQ'$ is also surjective. 
	\end{lem}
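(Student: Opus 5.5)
The plan is to reduce the claim to a statement about modules on an affine étale chart, where it becomes the elementary fact that surjectivity is preserved under taking preimages. Surjectivity of a morphism of coherent sheaves on a Deligne--Mumford stack can be checked étale locally. More precisely, a morphism of coherent sheaves is surjective if and only if its cokernel vanishes, and vanishing of a coherent sheaf can be tested after pullback to an étale atlas. So I fix an étale surjection $U\to\cX$ from a scheme, which may be taken to be a disjoint union of affine schemes $U_i=\operatorname{Spec}A_i$ since $\cX$ is noetherian. It then suffices to show that $\pi|_{\cE'}|_{U_i}\colon \cE'|_{U_i}\to\cQ'|_{U_i}$ is surjective for each $i$.

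The first step is to note that pullback along the étale, hence flat, morphism $U_i\to\cX$ is exact. Consequently the pullback of $\cE'=\ker(\cE\to\cQ\to\cQ/\cQ')$ is the kernel of $\cE|_{U_i}\to\cQ|_{U_i}\to(\cQ|_{U_i})/(\cQ'|_{U_i})$. In other words, the construction $\pi^{-1}(\cQ')$ commutes with restriction to the chart. The problem therefore reduces to quasi-coherent sheaves on an affine scheme, and via global sections to $A_i$-modules: we have a surjection $\pi\colon E\to Q$, a submodule $Q'\subseteq Q$, and $E'=\pi^{-1}(Q')$.

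The module statement is immediate. Given $q\in Q'$, surjectivity of $\pi$ gives some $e\in E$ with $\pi(e)=q$. Then the image of $\pi(e)$ in $Q/Q'$ is zero, so $e\in E'$, and $q=\pi(e)$ lies in $\pi(E')$. Hence $E'\to Q'$ is surjective. Because global sections on affines are exact for quasi-coherent sheaves, this gives surjectivity of the sheaf map on each $U_i$, and by étale descent of the vanishing of the cokernel, on $\cX$ itself.

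None of the steps is a serious obstacle. The only point requiring care is the compatibility of the kernel construction with étale pullback, which is handled by flatness.
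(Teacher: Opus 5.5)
Your proof is correct, but it takes a different route from the paper. The paper never leaves the stack. It uses only that coherent sheaves on a noetherian Deligne--Mumford stack form an abelian category. It writes down the morphism of short exact sequences from $0\to\cE'\to\cE\to\cQ/\cQ'\to0$ to $0\to\cQ'\to\cQ\to\cQ/\cQ'\to0$, with vertical maps $\pi|_{\cE'}$, $\pi$ and $\operatorname{id}$. Since $\ker(\operatorname{id})=0$ and $\operatorname{coker}\pi=0$, the snake lemma gives $\operatorname{coker}(\pi|_{\cE'})=0$.

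You instead reduce \'etale locally to modules over affine charts and finish with an element chase. That reduction needs several auxiliary facts, all of which you state correctly:
\begin{itemize}
\item surjectivity can be tested after pullback to an atlas;
\item flat pullback is exact, so the construction $\pi^{-1}(\cQ')$ commutes with restriction to the chart;
\item quasi-coherent sheaves on an affine scheme are equivalent to modules.
\end{itemize}

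The paper's argument is shorter and purely formal. It amounts to the fact that in any abelian category epimorphisms are stable under pullback, since $\cE'=\cE\times_{\cQ}\cQ'$. Your argument buys concreteness: the final step is a two-line chase, and it matches the chart-wise description of sheaves that the paper uses to define torsion subsheaves.

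One minor point: you do not need noetherianity to cover $\cX$ by affine \'etale charts. It only guarantees finitely many charts, and more importantly that $\cE'$ and $\cQ/\cQ'$ stay coherent.
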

	
	\begin{proof}
		The category of coherent sheaves on a noetherian Deligne--Mumford stack is abelian. There is a diagram
		$$\begin{tikzcd}
			0 \arrow[r] & \cE' \arrow[r] \arrow[d, "\pi|_{\cE'}"] & \cE \arrow[r] \arrow[d, "\pi"] & \cQ/\cQ' \arrow[r] \arrow[d, "\operatorname{id}"] & 0 \\
			0 \arrow[r] & \cQ' \arrow[r]                          & \cQ \arrow[r]                  & \cQ/\cQ' \arrow[r]                                 & 0
		\end{tikzcd}$$
		The horizontal lines are exact sequences. By the snake lemma, $\pi|_{\cE'}$ is surjective. 
	\end{proof}
	
	\begin{proof}[Proof of Theorem A (=Theorem~\ref{thm:A})]
		Set $\cE':=\pi^{-1}(\Qtor)=\ker(\cE\xrightarrow{\pi}\cQ\to \cQ/\Qtor)$. Lemma~\ref{lem:inverse-image} gives a surjection $\cE'\twoheadrightarrow\Qtor$. Since $\cE$ is locally free, it is torsion-free. Lemma~\ref{lem:subsheaf-torsion-free} implies that $\cE'$ is also torsion-free. Thus $\Qtor$ is a quotient of the torsion-free coherent sheaf $\cE'$.
	\end{proof}
	
	\section{Proof of Theorem B}\label{sec:remove}
	
	This section proves Theorem B (=Theorem~\ref{thm:B}). 
	
	\begin{proof}[Proof of Theorem B (=Theorem~\ref{thm:B})]
		Set $\cE_m:=(\Om)^{\otimes m}$. Let $\pi\colon\cE_m\twoheadrightarrow\cQ$ be a quotient. Let $\Qtor$ be the torsion subsheaf of $\cQ$ and $\Qbar=\cQ/\Qtor$. $\Qbar$ is torsion-free. The composite $\cE_m\twoheadrightarrow\cQ\twoheadrightarrow\Qbar$ is still a quotient. By Theorem~\ref{thm:CMZ}, the class $c_1(\Qbar)$ is pseudo-effective. By Theorem~\ref{thm:A}, $c_1(\Qtor)$ is well-defined. By Lemma~\ref{lem:c1-properties}, the class $c_1(\Qtor)$ is effective. The short exact sequence $0\to\Qtor\to\cQ\to\Qbar\to0$ shows
		$$
		c_1(\cQ)=c_1(\Qtor)+c_1(\Qbar).
		$$
		Since the pseudo-effective cone is closed under addition, $c_1(\cQ)$ is pseudo-effective. This proves the theorem.
	\end{proof}

\end{document}